\documentclass[conference]{IEEEtran}
\IEEEoverridecommandlockouts
\usepackage{cite}
\usepackage{amsmath,amssymb,amsfonts}
\usepackage{graphicx}
\usepackage{xcolor}
\def\BibTeX{{\rm B\kern-.05em{\sc i\kern-.025em b}\kern-.08em
    T\kern-.1667em\lower.7ex\hbox{E}\kern-.125emX}}

\usepackage{amsmath,graphicx,hyperref}
\usepackage[caption=false,font=footnotesize]{subfig}
\usepackage{amsmath,graphicx,cite,amssymb}
\usepackage{amsfonts,multirow,bm,array,stfloats}
\usepackage{xcolor}
\usepackage{enumitem}
\usepackage[english]{babel}
\usepackage{amsthm}
\usepackage{xpatch}
\usepackage{graphicx,float,cite,amssymb}
\usepackage{graphics}
 \DeclareGraphicsExtensions{.pdf,.jpeg,.png,.jpg}   
\usepackage{color}
\usepackage{bbm}
\usepackage{comment}
\usepackage{mdframed}
\usepackage{algorithm,algpseudocode}
\usepackage{pifont}
\usepackage{balance}

\newcommand{\xopt}{x_{\star}}

\newcommand{\bk}{\mathbi{k}}
\newcommand{\bz}{\mathbi{z}}

\newcommand{\tb}{\mathbi{\Tilde{b}}}

\newcommand{\vecone}{\mathbf{1}}

\newcommand{\mathbi}[1]{\bm{#1}}

\newcommand{\bx}{\mathbf{x}}
\newcommand{\by}{\mathbf{y}}

\newcommand{\bb}{\mathbf{b}}

\newtheorem{proposition}{Proposition}
\newtheorem{lemma}{Lemma}

\newtheorem{remark}{Remark}

\newtheorem{assumption}{Assumption}
\usepackage{balance}

\usepackage{etoolbox}

\AtBeginEnvironment{equation}{\small}
\AtBeginEnvironment{equation*}{\small}
\AtBeginEnvironment{align}{\small}
\AtBeginEnvironment{align*}{\small}
\AtBeginEnvironment{gather}{\small}
\AtBeginEnvironment{gather*}{\small}
\AtBeginEnvironment{multline}{\small}
\AtBeginEnvironment{multline*}{\small}

\begin{document}

\title{Budget-Constrained Multi-Consensus\\ Decentralized Gradient Descent
\thanks{The work of S. Ren and E. G. Larsson was supported in part by ELLIIT, VR, and the KAW foundation. The work of N. Michelusi was funded in part by NSF under grant CNS-2129015.}}

\author{\IEEEauthorblockN{Shuyi Ren}
\IEEEauthorblockA{\textit{Dept. of Electrical Engineering (ISY)} \\
\textit{Link\"{o}ping University}\\
Sweden \\
shuyi.ren@liu.se}
\and
\IEEEauthorblockN{Nicol\`{o} Michelusi}
\IEEEauthorblockA{\textit{School of ECEE} \\
\textit{Arizona State University}\\
United States\\
nicolo.michelusi@asu.edu}
\and
\IEEEauthorblockN{Erik G. Larsson}
\IEEEauthorblockA{\textit{Dept. of Electrical Engineering (ISY)} \\
\textit{Link\"{o}ping University}\\
Sweden \\
erik.g.larsson@liu.se}
}

\maketitle

\begin{abstract}
We investigate decentralized gradient descent (DGD) with emphasis on efficient communication and computation resource utilization under budget constraints. As a first step toward the broader communication-computation allocation problem, we consider and analyze a \textit{multi-consensus decentralized gradient descent} (mcDGD) scheme, where the number of consensus rounds and the stepsize are allowed to vary across iterations. Building on a unified analytical framework for DGD, we derive finite-time convergence bounds that explicitly characterize the interaction between consensus quality and optimization dynamics. Our analysis requires only convexity of the local objective functions while assuming smoothness and strong convexity of the global objective. The resulting bounds enable a principled consensus-allocation strategy under resource constraints, for which we show that equal allocation of consensus rounds across iterations is optimal under our stepsize rule, up to integer rounding. Numerical experiments corroborate the theoretical findings and demonstrate favorable communication--computation tradeoffs compared with existing multi-consensus decentralized optimization baselines.
\end{abstract}

\begin{IEEEkeywords}
Decentralized optimization, convex optimization, communication-computation resource efficiency
\end{IEEEkeywords}

\section{Introduction}
\label{sec:intro}

In decentralized optimization, a network of agents collaboratively
minimizes a global objective using local computation and peer-to-peer
communication~\cite{yang2019survey,nedic2018multi}. In applications
such as decentralized machine learning, distributed inference and
estimation, and multi-agent
coordination~\cite{kar2013consensus+}, the global
objective is typically a sum of local objective
functions. Decentralized gradient descent (DGD) is a fundamental and
widely used algorithm for this class of
problems~\cite{tsitsiklis1984distributed,Nedic_Static2009,Duchi_2012,pu2021distributed,lian2017can,shi2015extra}.
It operates by alternating between consensus formation and local gradient updates.

Communication plays a crucial role in decentralized optimization. Since the communication effort required to achieve a desired consensus accuracy depends on the network connectivity, convergence performance is influenced by both the graph topology and the available communication and computation resources. While a substantial body of work has focused on improving convergence through topology and communication protocol design~\cite{koloskova2020unified,Asaad2024ExtendingGraph,liao4145292consensus,nedic2017achieving,shah2024adaptive,jakovetic2014fast,chen2012fast}, the question of how to allocate communication and computation resources under a fixed budget remains relatively less understood. This question is the focus of this paper.

Both communication and computation consume resources, and their relative costs depend on hardware capabilities, network conditions, synchronization latency, and the complexity of the local objectives. This motivates a two-stage resource-allocation perspective. First, for a fixed number $T$ of gradient updates and a fixed total number $K$ of consensus rounds, one can optimize how these $K$ rounds are allocated across the $T$ iterations. Then, given an overall resource budget, one can further optimize the choice of $T$ and $K$ by accounting for the respective costs of computation and communication.

In this work, we focus primarily on the first stage of this problem. We study a version of DGD where the number of consensus steps is varied across iterations, termed \textit{multi-consensus DGD} (mcDGD) herein. Leveraging the Mean Hessian Theorem and the contraction mapping framework in \cite{larsson2025unifiedDGD}, 
we develop a convergence analysis of mcDGD and a principled method for optimally allocating a prescribed consensus budget across the optimization horizon, for a specific choice of time-varying step sizes. Our analysis requires strong convexity of the \emph{global} objective, but only convexity of
the local objectives. The resulting allocation rule can subsequently be combined with the overall budget constraint when selecting the total numbers of gradient updates and consensus rounds.

Different aspects of mcDGD have been studied previously~\cite{berahas2018balancing,berahas2021convergence,choi2022convergence}. 
Reference~\cite{berahas2018balancing} proposed a multi-consensus version of standard DGD, termed DGD$^t$, as well as diffusion variants termed NEAR-DGD$^t$ and NEAR-DGD$^+$. 
The analysis in~\cite{berahas2018balancing} establishes convergence guarantees under strongly convex local objectives, and was extended in~\cite{berahas2021convergence} to include multiple local gradient updates per iteration. 
More recently, \cite{choi2022convergence} relaxed the requirement of strong convexity of the local objectives, requiring instead strong convexity only at the global objective level.

However, these works share two limitations from the perspective of resource-aware design. 
First, their bounds include stepsize-independent asymptotic error terms (see Theorem 5.3 of~\cite{berahas2018balancing}, Theorem III.6 of~\cite{berahas2021convergence}, and Theorem 2.1 of~\cite{choi2022convergence}), suggesting that the bounds may be conservative in the small-stepsize regime. 
Second, they analyze prescribed consensus-allocation policies, rather than optimizing the allocation of consensus rounds under a prescribed communication budget. 
In contrast, our analysis yields an asymptotic error that can be made arbitrarily small with a proper stepsize selection, consistently with known results on fixed-consensus DGD~\cite{larsson2025unifiedDGD}, and provides a principled method for budget-aware consensus allocation based on finite-time convergence bounds.

Different aspects of mcDGD have also been studied previously~\cite{berahas2018balancing,berahas2021convergence,choi2022convergence,michelusi26}. Specifically, \cite{berahas2018balancing} proposed a multi-consensus version of standard DGD, termed DGD$^t$, as well as diffusion variants termed NEAR-DGD$^t$ and NEAR-DGD$^+$. Their analysis establishes convergence guarantees for these algorithms for strongly convex local objectives. Reference~\cite{berahas2021convergence}, building upon \cite{berahas2018balancing}, further considered the use of multiple gradient updates per iteration. However, the resulting bounds include stepsize-independent asymptotic error terms (see Theorem 5.3 of \cite{berahas2018balancing} and Theorem III.6 of \cite{berahas2021convergence}), which suggests that these bounds may be conservative in the small-stepsize regime. In contrast, our analysis yields an asymptotic error that can be made arbitrarily small by the proposed stepsize rule, consistently with known results on fixed-consensus DGD in~\cite{larsson2025unifiedDGD}. More importantly, the aforementioned works study the convergence behavior of prescribed consensus-allocation policies, whereas we explicitly account for the communication and computation costs of multiple consensus rounds and optimize their allocation under a prescribed resource budget.

More recently, \cite{choi2022convergence} relaxed the strong convexity requirement on the local objectives. Nevertheless, similar to~\cite{berahas2018balancing,berahas2021convergence}, their analysis still leads to a non-vanishing asymptotic error floor (see Theorem 2.1 of \cite{choi2022convergence}). Moreover, the methods in \cite{berahas2018balancing,berahas2021convergence,choi2022convergence} rely on heuristic choices of the number of consensus rounds per iteration, whereas we provide a principled and systematic way of making this choice, based on finite-time convergence bounds. Budget-aware design of DGD under communication and gradient noise has recently been studied in~\cite{michelusi26}, where the budget complexity of reaching a target accuracy is analyzed. However, that work considers the conventional DGD protocol, whereas here we focus on budget-aware allocation in the multi-consensus setting.

The experiments with our proposed methods in Section~\ref{sec:simulation} illustrate the proposed communication allocation mechanism under a fixed overall resource budget. By varying the available budget and the corresponding numbers of computation and communication operations, we demonstrate how the proposed allocation strategy makes effective use of the available resources.

\textbf{Contributions:} Our specific technical contributions are: 1) a new analytical framework and convergence bound for mcDGD, leveraging the tools from \cite{larsson2025unifiedDGD}; 2) a principled method for optimally allocating a fixed number of consensus rounds
across gradient iterations based on this bound. 
This provides a first stage toward a general budget-constrained design, in which the total numbers of gradient updates and consensus rounds can subsequently be selected according to the available computation and communication budget. Simulation results illustrate our results and demonstrate clear performance gains compared to the method
in \cite{berahas2018balancing}. While our analysis currently assumes a single gradient evaluation per iteration, it represents a first step toward solving the more general problem with arbitrary numbers of consensus and local gradient updates per iteration.

\section{Multi-consensus DGD}

\label{sec:tv_DGD}

\subsection{System Model}

We consider a network of $N$ agents, connected through a communication network characterized by a weight matrix $W$: $[W]_{i,j}\neq 0$ if node $j$ can communicate with node $i$, and $[W]_{i,j}=0$ otherwise. Let $f_n:\mathbb{R}^M\rightarrow\mathbb{R}$ be the local objective function of the $n$th agent; for example, it represents the loss on the local dataset of device $n$. For notational simplicity, we take $M=1$ in the following analysis; all results extend immediately to $M>1$ at the expense of some additional notation,  representing the collection of agent variables as an $M\times N$ matrix or a stacked vector in $\mathbb{R}^{MN}$. 

The goal is to solve the global optimization problem 

\begin{equation}
    F(x):=\frac{1}{N}\sum_{n=1}^N f_n(x) \label{eq:global_obj}
\end{equation}

To solve \eqref{eq:global_obj} in a decentralized manner, each agent maintains a local copy of the optimization variable and exchanges information only with its neighbors. Standard DGD, as the most fundamental method in consensus optimization, works by having each agent $n$ forming a sequence of iterates. We let $\bx_t=[x_1^{(t)},\ldots,x_N^{(t)}]^\top$ be the $N$-vector whose $n$th component $x_n^{(t)}$ represents the variable value of the $n$th agent at iteration $t$. The iterates are formed by consensus averaging among the agents' neighbors using the weights in $W$, followed by a local gradient descent step:

\begin{equation}\label{eq:DGD_update}
    \bx_{t+1}=W\bx_t-\eta\nabla f(\bx_t),
\end{equation}
where $\eta$ is a positive stepsize.

From a communication and computation resource-efficiency perspective, however, performing exactly one consensus round per iteration may not always be desirable. 
This motivates a multi-consensus variant of DGD that can adapt the amount of communication performed across the optimization horizon.

\subsection{Multi-consensus DGD}

Specifically, in the proposed mcDGD framework, iteration $t$ performs $k_t$ consensus rounds before the local gradient update, leading to the update rule

\begin{equation}
\bx_{t+1} = W^{k_t}\bx_t - \eta_t \nabla f(\bx_t),
\end{equation}
where $W^{k_t}$ is the $k_t$-th power of $W$, representing $k_t$ consecutive consensus rounds at iteration $t$. The sequence $\{k_t \mid t=0,\ldots,T-1\}$ is optimized with the goal of improving the efficiency of mcDGD. To quantify efficiency, we adopt a cost model in which each gradient update incurs a computation cost $c_{\text{comp}}$ and each consensus step incurs a communication cost $c_{\text{comm}}$. Hence, running $T$ iterations with $k_t$ consensus rounds at iteration $t$ leads to a total cost of $T c_{\text{comp}} + \sum_{t=0}^{T-1} k_t c_{\text{comm}}$. Our convergence bound then enables a principled choice of $\{k_t\}$ that minimizes the convergence error under a given overall resource budget. In the next section, we derive this bound and use it to determine the optimal $\{k_t\}$.

\subsection{Notation and assumptions}

We use $\bx^\star=x^\star\vecone$ to denote the stacked vector whose entries are identical and equal to $x^\star$ and $x^\star$ is the minimizer of the global objective~\ref{eq:global_obj}, at which $F'(x^\star)=0$. Here $W\bx^\star=\bx^\star$, $\nabla f(\bx)=[f_1'(x_1),\ldots,f_N'(x_N)]^\top$ and $\vecone^\top\nabla f(\bx^\star)=\sum_{n=1}^Nf_n'(x^\star)=NF'(x^\star)=0$, but $\nabla f(\bx^\star)\neq 0$ in general. Moreover, all mathematical notation is standard. $\mathcal{O}(\cdot)$ denotes  big-O and $\varTheta(\cdot)$ denotes big-$\Theta$. $\rho(A)$ denotes the spectral radius of matrix $A$. $\vecone$ is the vector of all ones.

We make the following assumptions throughout this paper.
\begin{assumption}\label{ass:network}
 Assumptions on the network: 
\begin{enumerate}
    \item The network is connected and undirected, $W = W^\top$.
    \item $W$ satisfies $W\vecone = \vecone$.
    \item The network is not bipartite (i.e., the connectivity graph contains at least one odd cycle).
\end{enumerate}
\end{assumption}
Let $\{\lambda_1,\lambda_2, \ldots,\lambda_N\}$ be the  eigenvalues of $W$.
Given Assumption \ref{ass:network}, $\{ \lambda_n \}$ are real-valued and can be ordered as $1 = \lambda_1 > \lambda_2 \geq \cdots \geq \lambda_N > -1$. Note that we do not require $W$ to be non-negative or positive (semi-) definite. 
We will use $r_t$ to denote the spectral radius of the disagreement matrix in the $t$th iteration: $r_t:=\rho\!\left(W^{k_t}-\frac{\vecone\vecone^\top}{N}\right)$. 

\begin{assumption}\label{ass:convex_smooth}
Assumptions on the objective functions:
\begin{enumerate}
    \item Each local objective function $f_n(\cdot)$ is convex and $L$-smooth.
    \item The global objective $F(\cdot)$ is $L$-smooth and $\mu$-strongly convex.
\end{enumerate}
\end{assumption}
For instance, in an (overdetermined) least-squares problem, each agent may only observe one projection of the decision variable, so its local function is convex but flat in some directions. When the set of projections across all agents spans the whole space, the aggregate global objective becomes strongly convex. 

\section{Convergence Analysis of mcDGD}
\label{sec:convergence}

To analyze the convergence behavior of mcDGD, we decompose the error between the network-wide iterate $\bx_t$ and the global optimum $x^\star\vecone$ at iteration $t$

\begin{align*} 
    \bx_t-x^\star\vecone=\mathbi{\delta}_t+e_t\vecone,
\end{align*}
where we defined  the consensus error $\mathbi{\delta}_t$ and optimization error $e_t$ as

\begin{align*} 
    \mathbi{\delta}_t := \bx_t-\bar{x}_t\mathbf{1}:=\Pi_\vecone^\perp \bx_t,\quad 
    e_t := \bar{x}_t - x^\star.
\end{align*}
Here,  $\bar{x}_t = \frac{1}{N} \mathbf{1}^\top \bx_t$ is the network average and $\Pi_\vecone^\perp = I-\frac{\vecone\vecone^\top}{N}$ is the projection onto the orthogonal complement of $\mathrm{span}\{\vecone\}$.
Then, by orthogonality of the decomposition,

\begin{equation*} 
    \|\bx_t-\xopt\mathbf{1}\|\leq \sqrt{\|\mathbi{\delta}_t\|^2+N|e_t|^2}.
\end{equation*}

The following lemma quantifies how the errors evolve over the iterations.

\begin{lemma}[Error Dynamics]\label{lem:error_dynamics}
Define $E_t=|e_t|$ and $D_t=\|\mathbi{\delta}_t\|/\sqrt{N}$ for $t\ge 0$, and let $\by_t:=[E_t,\;D_t]^\top$. Under Assumption~\ref{ass:network}-\ref{ass:convex_smooth} given in Section \ref{sec:tv_DGD} and the stepsize condition $\eta_t\leq 2/(L+\mu)$, we have

\begin{equation}\label{eq:linear_system_tv_lem}
    \by_{t+1} \leq A_t\by_t+\bb_t,
\end{equation}
where $\bb_t=[0,\; \eta_t \|\nabla f(x^\star \mathbf{1})\|/\sqrt{N}]^\top$,

\begin{equation*}
A_t = \left[\begin{array}{cc}
1-\eta_t\mu, &\eta_t L \\
\eta_t L,& r_t+\eta_t L
\end{array}\right].
\end{equation*}
($\le$ denotes component-wise inequality.)
\end{lemma}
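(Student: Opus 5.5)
We will project the update $\bx_{t+1}=W^{k_t}\bx_t-\eta_t\nabla f(\bx_t)$ onto $\mathrm{span}\{\vecone\}$ and onto its orthogonal complement, and bound each projection by $E_t$, $D_t$ and $\bb_t$. The key device is the Mean Hessian Theorem of \cite{larsson2025unifiedDGD}. Since each $f_n$ is convex and $L$-smooth, it gives
\begin{equation*}
\nabla f(\bx_t)-\nabla f(\bx^\star)=H_t(\bx_t-\bx^\star),
\end{equation*}
where $H_t=\mathrm{diag}(h_1^{(t)},\ldots,h_N^{(t)})$ and $h_n^{(t)}=\int_0^1 f_n''(x^\star+s(x_n^{(t)}-x^\star))\,ds\in[0,L]$. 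We substitute $\bx_t-\bx^\star=e_t\vecone+\mathbi{\delta}_t$ and $\nabla f(\bx_t)=\nabla f(\bx^\star)+H_t(e_t\vecone+\mathbi{\delta}_t)$. We will also use $W^{k_t}\vecone=\vecone$, $\vecone^\top W^{k_t}=\vecone^\top$, and $W^{k_t}\mathbi{\delta}_t=(W^{k_t}-\vecone\vecone^\top/N)\mathbi{\delta}_t$.

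\emph{Average dynamics.} Multiply the update by $\vecone^\top/N$ and use $\vecone^\top\nabla f(\bx^\star)=0$ and $\vecone^\top\mathbi{\delta}_t=0$. This gives
\begin{equation*}
e_{t+1}=(1-\eta_t\bar h_t)e_t-\tfrac{\eta_t}{N}\vecone^\top H_t\mathbi{\delta}_t,\qquad \bar h_t=\tfrac1N\textstyle\sum_n h_n^{(t)}.
\end{equation*}
For the cross term, Cauchy--Schwarz gives $\frac1N|\vecone^\top H_t\mathbi{\delta}_t|\le \frac1N\|H_t\vecone\|\|\mathbi{\delta}_t\|\le L\,D_t$. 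For the first term we need $|1-\eta_t\bar h_t|\le 1-\eta_t\mu$, which requires $\bar h_t\ge\mu$ and $\eta_t\bar h_t\le 2-\eta_t\mu$. This is the main obstacle. The averaged curvature $\bar h_t$ is taken along a non-consensual path, so the strong convexity of $F$ does not directly lower-bound it, and no single $f_n$ need be strongly convex.

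I would handle the obstacle by splitting the mean Hessian. Write $h_n^{(t)}=g_n^{(t)}+(h_n^{(t)}-g_n^{(t)})$, where $g_n^{(t)}=\int_0^1 f_n''(x^\star+s e_t)ds$ is evaluated along the consensual path $x^\star+se_t$. Then $\frac1N\sum_n g_n^{(t)}=\int_0^1F''(x^\star+se_t)ds\in[\mu,L]$. Equivalently, write $\nabla f(\bx_t)-\nabla f(\bx^\star)=[\nabla f(\bar x_t\vecone)-\nabla f(\bx^\star)]+[\nabla f(\bx_t)-\nabla f(\bar x_t\vecone)]$. The first bracket, averaged, equals $F'(\bar x_t)$. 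It yields $|e_t-\eta_tF'(\bar x_t)|\le(1-\eta_t\mu)E_t$, using the standard gradient-step contraction for $\mu$-strongly convex, $L$-smooth $F$ with $\eta_t\le 2/(L+\mu)$. The second bracket equals $\tilde H_t\mathbi{\delta}_t$ with $\tilde H_t$ diagonal in $[0,L]$ (mean Hessian along $\bar x_t\vecone\to\bx_t$). Its average is bounded by $L D_t$ as above. This gives the first row, $E_{t+1}\le(1-\eta_t\mu)E_t+\eta_tLD_t$.

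\emph{Disagreement dynamics.} Apply $\Pi_\vecone^\perp$ to the update, using the same split:
\begin{equation*}
\mathbi{\delta}_{t+1}=(W^{k_t}-\tfrac{\vecone\vecone^\top}{N})\mathbi{\delta}_t-\eta_t\Pi_\vecone^\perp\big[\tilde H_t\mathbi{\delta}_t+G_t e_t\vecone+\nabla f(\bx^\star)\big],
\end{equation*}
where $G_t=\mathrm{diag}(g_n^{(t)})$. We take norms and use $\|\Pi_\vecone^\perp\|\le1$, $\|\tilde H_t\|,\|G_t\|\le L$, the symmetry of $W$ (so the operator norm equals $r_t$), and $\|\vecone\|=\sqrt N$. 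This yields $\|\mathbi{\delta}_{t+1}\|\le r_t\|\mathbi{\delta}_t\|+\eta_tL\|\mathbi{\delta}_t\|+\eta_tL\sqrt N E_t+\eta_t\|\nabla f(\bx^\star)\|$. Dividing by $\sqrt N$ gives the second row, $D_{t+1}\le\eta_tLE_t+(r_t+\eta_tL)D_t+\eta_t\|\nabla f(x^\star\vecone)\|/\sqrt N$. Stacking the two rows gives \eqref{eq:linear_system_tv_lem}.
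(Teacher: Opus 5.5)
Your proposal is correct. Once you drop the full mean-Hessian expansion about $\bx^\star$, it is essentially the paper's proof: you split $\nabla f(\bx_t)-\nabla f(\bx^\star)$ at $\bar x_t\vecone$, use the gradient-step contraction $|e_t-\eta_tF'(\bar x_t)|\le(1-\eta_t\mu)E_t$ (valid for $\eta_t\le 2/(L+\mu)$) for the average, and use the triangle inequality with $\|W^{k_t}\mathbi{\delta}_t\|\le r_t\|\mathbi{\delta}_t\|$ and the $L$-Lipschitz gradient bounds for the disagreement. Your diagonal matrices $\tilde H_t$ and $G_t$ only rephrase those Lipschitz bounds; if the $f_n$ are not twice differentiable, use difference quotients in place of $\int_0^1 f_n''$, which in the scalar case still lie in $[0,L]$.
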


\begin{proof}
We can express the decomposed errors as 
\begin{align*} 
    e_{t+1} &=\bar{x}_{t+1}-x^\star, \\
    &\overset{(a)}{=}\frac{1}{N} \mathbf{1}^\top[\bx_t- \eta_t\nabla f(\bx_t)] - x^\star-\eta_t F'(\bar{x}_t)+\frac{\eta_t}{N}\vecone^\top\nabla f(\bar{x}_t\vecone),\\
    & =e_t-\eta_t F'(\bar{x}_t)- \frac{\eta_t }{N} \mathbf{1}^\top[\nabla f(\bx_t)-\nabla f(\bar{x}_t\vecone)],\\
    \mathbi{\delta}_{t+1} &=\bx_{t+1}-\bar{x}_{t+1}\vecone\\
    &\overset{(b)}{=}W^{k_t}\bx_t - \eta_t \nabla f(\bx_t)-\frac{1}{N} \mathbf{1}^\top[\bx_t - \eta_t\nabla f(\bx_t)]\vecone,\\
    &=W^{k_t}\mathbi{\delta}_t-\eta_t \Pi_\vecone^\perp\nabla f(\bx_t)+\eta_t \Pi_\vecone^\perp\nabla f(\bar{x}_t\vecone)-\eta_t \Pi_\vecone^\perp\nabla f(\bar{x}_t\vecone),\\
    &\qquad +\eta_t \Pi_\vecone^\perp\nabla f(x^\star\vecone)-\eta_t \Pi_\vecone^\perp\nabla f(x^\star\vecone)\\
    &=W^{k_t}\mathbi{\delta}_t-\eta_t \Pi_\vecone^\perp[\nabla f(\bx_t)-\nabla f(\bar{x}_t\vecone)],\\
    &\qquad -\eta_t \Pi_\vecone^\perp[\nabla f(\bar{x}_t\vecone)-\nabla f(x^\star\vecone)]-\eta_t \Pi_\vecone^\perp\nabla f(x^\star\vecone).
\end{align*}
where in $(a)$ and $(b)$ we used that $ F'(\bar{x}_t)=\vecone^\top\nabla f(\bar{x}_t\vecone)/N$ and $\vecone^\top W^{k_t}=\vecone^\top$. 

We then compute norms and bound them, using the triangle inequality, the fact that $\mathbi{\delta}_t$ lies in the orthogonal complement of $\vecone$ (i.e., $\Pi_\vecone^\perp \mathbi{\delta}_t = \mathbi{\delta}_t$), and the spectral properties of $W^{k_t}$:

\begin{equation}
    |e_{t+1}|\leq |e_t-\eta_t F'(\bar{x}_t)|+\frac{\eta_t}{\sqrt{N}}\|\nabla f(\bx_t)-\nabla f(\bar{x}_t\vecone)\|,
\end{equation}
\begin{align}
    \|\mathbi{\delta}_{t+1}\|\leq &\rho(W^{k_t}-\frac{\vecone\vecone^\top}{N})\|\mathbi{\delta}_t\|+\eta_t \|\nabla f(\bx_t)-\nabla f(\bar{x}_t\vecone)\|\nonumber\\
    &+\eta_t \|\nabla f(\bar{x}_t\vecone)-\nabla f(x^\star\vecone)\|+\eta_t \|\nabla f(x^\star\vecone)\|.
\end{align}

From smoothness,
\begin{equation*} 
\|\nabla f(\bx_t)-\nabla f(\bar{x}_t\vecone)\|\leq L\|\bx_t-\bar{x}_t\vecone\|=L\|\mathbi{\delta}_t\|,
\end{equation*}
\begin{equation*} 
\|\nabla f(\bar{x}_t\vecone)-\nabla f(x^\star\vecone)\|\leq L\|\bar{x}_t\vecone-x^\star\vecone\|=L\sqrt{N}|e_t|.
\end{equation*}

Under the stepsize condition $\eta_t\leq 2/(L+\mu)$, the following  tight   bound can be obtained using the MHT and contraction mapping theory (see Sections~III and IV-C in \cite{larsson2025unifiedDGD}):

\begin{align*} 
|e_t-\eta_t F'(\bar{x}_t)|
&=|\bar{x}_t-x^\star-\eta_t[F'(\bar{x}_t)-F'(x^\star)]|\\
&\leq(1-\eta_t\mu)|\bar{x}_t-x^\star|=(1-\eta_t\mu)|e_t|.
\end{align*}

Therefore,
\begin{equation*} 
|e_{t+1}|\leq(1-\eta_t\mu)|e_t|+\frac{1}{\sqrt{N}}\eta_t L\|\mathbi{\delta}_t\|,
\end{equation*}
\begin{equation*} 
\|\mathbi{\delta}_{t+1}\|\leq(r_t+\eta_t L)\|\mathbi{\delta}_t\|+\eta_t L\sqrt{N}|e_t|+\eta_t\|\nabla f(x^\star\vecone)\|,
\end{equation*}
where $r_t=\rho(W^{k_t}-\frac{\vecone\vecone^\top}{N})$. 
Dividing the second inequality by $\sqrt{N}$ and using $E_t=|e_t|$, $D_t=\|\mathbi{\delta}_t\|/\sqrt{N}$ yields \eqref{eq:linear_system_tv_lem}.
\end{proof}

Lemma~\ref{lem:error_dynamics} reduces the convergence analysis of mcDGD to the stability analysis of the two-dimensional time-varying linear system in \eqref{eq:linear_system_tv_lem}. The matrix $A_t$ captures the interaction between the optimization error and the consensus error: the diagonal terms describe their individual contractions, while the off-diagonal terms quantify their coupling. To obtain a convergent error bound, it is sufficient to choose the stepsize so that $A_t$ is contractive at every iteration.

A convenient sufficient condition is

\begin{equation}\label{eq:eta_t_condition}
    \eta_t < \frac{1-r_t}{L(1+L/\mu)},
\end{equation}
under which the eigenvalues of $A_t$ lie strictly inside the unit circle. This condition also reveals the coupling between optimization and communication: when the network mixing is poor, i.e., $r_t$ is close to one, the stepsize must be smaller; when the network mixing improves, a larger stepsize can be used.

Motivated by this observation, we choose an iteration-varying stepsize of the form

\begin{equation}\label{eq:eta_t}
    \eta_t=\kappa(1-r_t),
\end{equation}
where
\[
    0<\kappa<\frac{1}{L(1+L/\mu)}.
\]
With this choice, the coupling matrix becomes
\[
A_t =
I-(1-r_t)
\left[\begin{array}{cc}
\kappa\mu & -\kappa L \\
-\kappa L & 1-\kappa L
\end{array}\right]
:= I-(1-r_t)B .
\]
The advantage of this choice of stepsize is that all time variation is isolated in the scalar factor $1-r_t$, while the matrix $B$ is fixed across iterations. Moreover, the above condition on $\kappa$ ensures that $B$ is positive definite. Let $\xi_1$ denote the smallest eigenvalue of $B$
, i.e.,
\[
0<\xi_1\leq \xi_2<1.
\]
Then, we can write the eigen-decomposition of $B$ as
\[
    B=Q\Lambda Q^\top,
\]
where $\Lambda=\mathrm{diag}(\xi_1,\xi_2)$ and $Q$ is orthonormal. Consequently, all matrices $A_t$ share the same eigenvectors, and products of the form $\prod_t A_t$ can be characterized explicitly through the scalar factors $1-(1-r_t)\xi_i$, $i=1,2$. This allows us to explicitly characterize the accumulated error propagation over time. The original coupled two-dimensional error dynamics is decoupled into scalar product terms whose dependence on  $\{r_t\}$ becomes explicit. Combining this observation with  \eqref{eq:linear_system_tv_lem} yields the following upper bound on the error.

\begin{proposition}[Upper bound on the convergence error]
\label{prop:upper_bound_only}The norm of
$\by_T$ defined in Lemma~\ref{lem:error_dynamics} satisfies
\begin{align}
    \|\by_T\|\leq
    &\prod_{t=0}^{T-1}\left(1-(1-r_t)\xi_1\right)\left\|\by_0-B^{-1}\tb\right\|\nonumber\\
    &+\frac{C_\kappa\sqrt{L^2+\mu^2}}{\mu-\kappa(\mu L+L^2)},\label{eq:y_bar}
\end{align}
where
\[
    C_\kappa:=\frac{\kappa\|\nabla f(x^\star\mathbf{1})\|}{\sqrt{N}},\qquad\tb:=
    \begin{bmatrix}
    0\\
    C_\kappa
    \end{bmatrix}.
\]
\end{proposition}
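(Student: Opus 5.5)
The plan is to turn the inequality of Lemma~\ref{lem:error_dynamics} into an exact affine recursion with a common fixed point, and then use the shared eigenvectors of the matrices $A_t$.

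\textbf{Step 1 (rewrite the forcing term).} With the stepsize rule \eqref{eq:eta_t}, the forcing term becomes $\bb_t=(1-r_t)\tb$, since $\eta_t\|\nabla f(x^\star\vecone)\|/\sqrt{N}=(1-r_t)C_\kappa$. So \eqref{eq:linear_system_tv_lem} reads
\[
\by_{t+1}\le A_t\by_t+(1-r_t)\tb,\qquad A_t=I-(1-r_t)B .
\]
Set $\by^\star:=B^{-1}\tb$, which exists because $B$ is positive definite. Then
\[
\by^\star = \by^\star-(1-r_t)B\by^\star+(1-r_t)\tb = A_t\by^\star+(1-r_t)\tb \quad\text{for every } t .
\]
Thus $\by^\star$ is a common fixed point of all the affine maps, regardless of $\{r_t\}$. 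The stepsize bound $\eta_t\le 2/(L+\mu)$ needed in Lemma~\ref{lem:error_dynamics} holds for the given range of $\kappa$, since $\kappa(1-r_t)\le\kappa<1/(L+L^2/\mu)\le 2/(L+\mu)$.

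\textbf{Step 2 (pass from the inequality to an equality).} The inequality is componentwise, so I compare with the equality system $\bz_{t+1}=A_t\bz_t+(1-r_t)\tb$, $\bz_0=\by_0$. The entries of $A_t$ are nonnegative:
\begin{itemize}
\item $1-\eta_t\mu\ge 0$, because $\eta_t\le\kappa<1/\mu$;
\item $\eta_t L\ge 0$ and $r_t+\eta_t L\ge 0$, because $r_t\ge 0$ as a spectral radius.
\end{itemize}
Hence $\by\le\bz$ implies $A_t\by\le A_t\bz$, and induction gives $\by_t\le\bz_t$ for all $t$. Both vectors are nonnegative, since $\by_t$ consists of norms, so $\|\by_T\|\le\|\bz_T\|$. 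Subtracting the fixed-point identity gives
\[
\bz_T-\by^\star=\Big(\prod_{t=0}^{T-1}A_t\Big)(\by_0-\by^\star),
\]
and therefore $\|\by_T\|\le\|\prod_t A_t\|\,\|\by_0-\by^\star\|+\|\by^\star\|$. I expect this comparison step to be the only delicate point, because the nonnegativity of $A_t$ is exactly what lets the componentwise inequality propagate through the recursion.

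\textbf{Step 3 (norm of the product).} Because $B=Q\Lambda Q^\top$, we have
\[
\prod_t A_t=Q\,\mathrm{diag}\Big(\prod_t\big(1-(1-r_t)\xi_1\big),\ \prod_t\big(1-(1-r_t)\xi_2\big)\Big)Q^\top .
\]
Its spectral norm is the larger of the two diagonal magnitudes. Each factor is positive, since $0\le 1-r_t\le 1$ by connectivity and non-bipartiteness, and $\xi_i<1$. Each factor is also decreasing in $\xi_i$, so the maximum is attained at $\xi_1$. This gives the first term of \eqref{eq:y_bar}.

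\textbf{Step 4 (bound the fixed point).} I compute $\|B^{-1}\tb\|$ directly from the $2\times2$ inverse. The determinant is
\[
\det B=\kappa\mu(1-\kappa L)-\kappa^2L^2=\kappa\big(\mu-\kappa(\mu L+L^2)\big)>0 .
\]
The adjugate applied to $\tb$ gives $C_\kappa[\kappa L,\ \kappa\mu]^\top$, whose norm is $C_\kappa\kappa\sqrt{L^2+\mu^2}$. Dividing by $\det B$ yields exactly the constant in \eqref{eq:y_bar}.
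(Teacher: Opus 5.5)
Your proposal is correct and takes essentially the same route as the paper. Both use a comparison system driven by $(1-r_t)\tb$, a shift by the common fixed point $B^{-1}\tb$, the shared eigenbasis of $B$ to reduce the product norm to $\prod_t\left(1-(1-r_t)\xi_1\right)$, and the explicit $2\times 2$ inverse; in addition, your check that $A_t$ is entrywise nonnegative (together with $\eta_t\le 2/(L+\mu)$) justifies the componentwise comparison step, which the paper only asserts from Lemma~\ref{lem:error_dynamics}.
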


\begin{proof}
We initialize $\bar{\by}_0=\by_0$ and define $\bar{\by}_t$ as a componentwise upper bound on $\by_t$ satisfying
\[
\bar{\by}_{t+1}=\left(I-(1-r_t)B\right)\bar{\by}_t+(1-r_t)\tb.
\]
By Lemma~\ref{lem:error_dynamics}, we have
\[
\by_t\leq\bar{\by}_t,
\]
element wise and, since both vectors consist of nonnegative error quantities,
\[
\|\by_t\|\leq\|\bar{\by}_t\|.
\]
Define the shifted variable
\[
\bz_t:=\bar{\by}_t-B^{-1}\tb.
\]
Then
\begin{align*}
    \bz_{t+1}
    &=\bar{\by}_{t+1}-B^{-1}\tb\\
    &=\left(I-(1-r_t)B\right)\bar{\by}_t+(1-r_t)\tb-B^{-1}\tb\\
    &=\left(I-(1-r_t)B\right)\left(\bar{\by}_t-B^{-1}\tb\right)\\
    &=\left(I-(1-r_t)B\right)\bz_t.
\end{align*}
Therefore,
\[
\bz_T=\prod_{t=0}^{T-1}\left(I-(1-r_t)B\right)\bz_0.
\]

Since $B$ is symmetric, it admits an eigendecomposition
\[
B=Q\Lambda Q^\top,\qquad\Lambda=\operatorname{diag}(\xi_1,\xi_2),
\]
where $Q$ is orthogonal. Hence,
\begin{align*}
    \prod_{t=0}^{T-1}\left(I-(1-r_t)B\right)
    &=Q\left[\prod_{t=0}^{T-1}\left(I-(1-r_t)\Lambda\right)\right]Q^\top.
\end{align*}
It follows that
\begin{align*}
    \left\|\prod_{t=0}^{T-1}\left(I-(1-r_t)B\right)\right\|_2
    &=\max_{i\in\{1,2\}}\prod_{t=0}^{T-1}\left(1-(1-r_t)\xi_i\right).
\end{align*}

Under the condition on $\kappa$, the eigenvalues of $B$ satisfy
\[
0<\xi_1\leq \xi_2<1.
\]
Moreover, since 
$r_t\in[0,1)$, $0< 1-r_t\leq 1$, and therefore
\[
0<1-(1-r_t)\xi_2\leq 1-(1-r_t)\xi_1\leq 1.
\]
Thus, the smallest eigenvalue, $\xi_1$, determines the slowest
contraction, and
\[
\left\|\prod_{t=0}^{T-1}\left(I-(1-r_t)B\right)\right\|_2=\prod_{t=0}^{T-1}\left(1-(1-r_t)\xi_1\right).
\]
Consequently,
\[
\|\bz_T\|\leq\prod_{t=0}^{T-1}\left(1-(1-r_t)\xi_1\right)\|\bz_0\|.
\]

Recalling that
\[
\bz_t=\bar{\by}_t-B^{-1}\tb,
\]
we have
\[
\bar{\by}_T=\bz_T+B^{-1}\tb.
\]
Hence, by the triangle inequality,
\begin{align*}
    \|\bar{\by}_T\|
    &\leq\|\bz_T\|+\|B^{-1}\tb\|\\
    &\leq\prod_{t=0}^{T-1}\left(1-(1-r_t)\xi_1\right)\left\|\bar{\by}_0-B^{-1}\tb\right\|+\|B^{-1}\tb\|,
\end{align*}
after noting that $\bar{\by}_0=\by_0$. It remains to evaluate the last term.

For
\[
B=
\begin{bmatrix}
\kappa\mu & -\kappa L\\
-\kappa L & 1-\kappa L
\end{bmatrix},
\]
we have
\[
B^{-1}\tb=\frac{C_\kappa}{\mu-\kappa(\mu L+L^2)}
\begin{bmatrix}
L\\
\mu
\end{bmatrix}.
\]
Therefore,
\[
\|B^{-1}\tb\|=\frac{C_\kappa\sqrt{L^2+\mu^2}}{\mu-\kappa(\mu L+L^2)}.
\]
Combining the above inequalities with
\[
\|\by_T\|\leq\|\bar{\by}_T\|
\]
yields \eqref{eq:y_bar}.

\end{proof}

Note that
\[
\frac{\kappa}{\mu-\kappa(\mu L+L^2)}
=\frac{\kappa}{\mu}\frac{1}{1-\frac{\kappa(\mu L+L^2)}{\mu}}
=\frac{\kappa}{\mu}+\mathcal{O}(\kappa^2).
\]
Hence,
\[
\frac{\kappa}{\mu-\kappa(\mu L+L^2)}=\varTheta(\kappa),\qquad\kappa\rightarrow 0^+.
\]
Therefore, the error floor
\[
\frac{\kappa\sqrt{L^2+\mu^2}}{\mu-\kappa(\mu L+L^2)}\frac{\|\nabla f(x^\star\mathbf{1})\|}{\sqrt{N}}
\]
is of order $\mathcal{O}(\kappa)$ and can thus be reduced by choosing a smaller $\kappa$. 

Moreover, \eqref{eq:y_bar} reveals a trade-off in the choice of $\kappa$. For sufficiently small $\kappa$, we have
\[
\xi_1=\kappa\mu+\mathcal{O}(\kappa^2),
\]
so increasing $\kappa$ strengthens the contraction factor
\[
1-(1-r_t)\xi_1
\]
and hence accelerates the decay of the transient term. At the same time, a larger $\kappa$ increases the error floor, which scales as $\mathcal{O}(\kappa)$.

Proposition~\ref{prop:upper_bound_only} holds for the stepsize choice in \eqref{eq:eta_t}. Extending the result to more general fixed stepsizes satisfying \eqref{eq:eta_t_condition} is left for future work.
Also, in the special case of a constant number of consensus rounds per iteration, say $k_t\equiv k$ for some
$k$, \eqref{eq:eta_t} gives the constant stepsize $\eta_t\equiv \eta$ for all $t$. 
Then Proposition \ref{prop:upper_bound_only} gives
\begin{equation}\label{eq:general_bound}
\limsup_{t\rightarrow\infty}
\|\bx_t-\xopt\mathbf{1}\|
= \mathcal{O}(\kappa) = 
\mathcal{O}(\eta),
\end{equation}
showing that DGD converges to a neighborhood of the optimum whose size scales linearly with the stepsize, consistently with classical DGD convergence results under constant stepsizes~\cite{Nedic_Static2009,Duchi_2012,jakovetic2014fast}.
This stepsize-dependent error floor differs from prior multi-consensus DGD analyses~\cite{berahas2018balancing,berahas2021convergence,choi2022convergence}, where the asymptotic error floor is determined by the network topology and the number of consensus rounds, and cannot in general be made arbitrarily small through stepsize selection.

Note how the convergence bound naturally separates transient and residual effects. The first term in \eqref{eq:upper_bound} determines how communication influences the error decay, while the second term characterizes the residual error induced by local objective heterogeneity.

\begin{remark}
Reference~\cite{choi2022convergence} also studies coupled consensus--optimization error dynamics under time-varying consensus mechanisms. However, its analysis assumes fixed stepsizes together with specific summability requirements on $\{r_t\}$, restricting the admissible consensus schedules. In contrast, our framework explicitly couples optimization and network mixing through~\eqref{eq:eta_t}, allowing iteration-varying stepsizes that naturally adapt to communication quality.
\end{remark}

\section{Optimizing the number of consensus rounds}

We next determine the sequence $\{k_t\}$ that minimizes the upper bound on the overall error, while satisfying a constraint on the total communication and computation cost. Proposition~\ref{prop:upper_bound_only} explicitly reveals how the communication effort affects convergence. By the definition of $r_t$, we have
\[
r_t
=
\nu^{k_t},
\qquad
\nu:=\max_{n\geq 2}|\lambda_n|\in[0,1).
\]
Substituting this relation into \eqref{eq:y_bar} gives
\begin{align}
\|\bx_T-\xopt\mathbf{1}\|
\leq&
\sqrt{N}\|\by_T\|
\nonumber\\
\leq&
\sqrt{N}
\prod_{t=0}^{T-1}
\left(
1-\xi_1+\nu^{k_t}\xi_1
\right)
\left\|
\by_0+B^{-1}\tb
\right\|
\nonumber\\
&+
\frac{
C_\kappa\sqrt{N(L^2+\mu^2)}
}{
\mu-\kappa(\mu L+L^2)
}.
\label{eq:upper_bound}
\end{align}

We observe that the communication schedule $\{k_t\}_{t=0}^{T-1}$
affects the convergence bound only through the product
\[
\prod_{t=0}^{T-1}
\left(
1-\xi_1+\nu^{k_t}\xi_1
\right).
\]
Since $\nu\in[0,1)$, increasing $k_t$ decreases $\nu^{k_t}$ and hence
reduces the corresponding contraction factor
\[
1-\xi_1+\nu^{k_t}\xi_1.
\]
Thus, allocating more consensus rounds tightens the transient part of the convergence bound, at the expense of increased communication cost.
Accordingly, for given $\kappa$ and $T$, designing the communication schedule reduces to minimizing this product term under a total resource budget. This leads to the optimization problem

\begin{subequations}
\label{prob_cost}
\begin{align}
\underset{\bk=\{k_t|t=0,\ldots,T-1\}}{\min}
&\quad
\sum_{t=0}^{T-1}
\ln(1-\xi_1+\nu^{k_t}\xi_1)
\label{prob_cost_obj}
\\
\text{subject to}
&\quad
Tc_{\mathrm{comp}}
+
\sum_{t=0}^{T-1}
k_t c_{\mathrm{comm}}
\le R.
\label{prob_cost_cons}
\end{align}
\end{subequations}

The following proposition shows that the optimal choice is to  make $\{k_t\}$ as close to constant as possible over the iterations.

\begin{proposition}[Optimality of equal communication resource allocation per iteration]\label{prop:optimality}
For problem \eqref{prob_cost}, under an optimal communication schedule, the number of consensus rounds $\{k_t\}_{t=0}^{T-1}$ take at most two adjacent integer values. Specifically, the optimal solution time-shares between

\[
\bar{k}
=
\left\lfloor
\frac{R/T-c_{\mathrm{comp}}}
{c_{\mathrm{comm}}}
\right\rfloor \quad \text{and} \quad \bar{k}+1.
\]
\end{proposition}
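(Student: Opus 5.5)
The plan is to recast problem \eqref{prob_cost} as minimizing a separable sum $\sum_{t=0}^{T-1} g(k_t)$ over nonnegative integers under a total-sum constraint. Here
\[
g(k):=\ln\!\left(1-\xi_1+\nu^{k}\xi_1\right).
\]
The constraint \eqref{prob_cost_cons} is equivalent to $\sum_t k_t\le K:=\lfloor (R-Tc_{\mathrm{comp}})/c_{\mathrm{comm}}\rfloor$. Since $g$ is nonincreasing in $k$ (because $\nu\in[0,1)$ and $\xi_1>0$), some optimal schedule uses the full budget, $\sum_t k_t=K$. If $\nu=0$, $g$ is constant for $k\ge1$ and this degenerate case is handled separately. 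The key structural fact to establish is that $g$ is discretely convex. Once that holds, a standard exchange argument shows that any minimizer can be balanced so that $\max_t k_t-\min_t k_t\le 1$.

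\textbf{Step 1 (convexity of $g$).} Extend $g$ to real $k\ge0$ and write $u=\nu^k\in(0,1]$ and $a:=1-\xi_1\in(0,1)$, so that $g=\ln(a+\xi_1 u)$. Since $du/dk=u\ln\nu$, one obtains
\[
g'(k)=\frac{\xi_1 u\ln\nu}{a+\xi_1u}.
\]
The map $u\mapsto \xi_1u/(a+\xi_1u)$ is increasing in $u$, and $u$ is decreasing in $k$. Multiplying by $\ln\nu<0$ therefore makes $g'$ increasing in $k$, so $g$ is convex. Equivalently, $g''(k)=a\,\xi_1 u(\ln\nu)^2/(a+\xi_1u)^2>0$. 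This step uses $\xi_1<1$, which is ensured by the choice of $\kappa$ as noted after \eqref{eq:eta_t}. Restricting to the integers yields $g(k+1)-g(k)$ nondecreasing in $k$.

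\textbf{Step 2 (exchange argument).} Suppose an optimal schedule has indices $s,t$ with $k_s\ge k_t+2$. Replace the pair by $(k_s-1,k_t+1)$. This preserves the sum, and the objective changes by
\[
[g(k_s-1)-g(k_s)]+[g(k_t+1)-g(k_t)].
\]
By discrete convexity, $g(k_t+1)-g(k_t)\le g(k_s)-g(k_s-1)$, so the change is $\le 0$ (strictly negative under strict convexity). Repeating the move strictly decreases $\sum_t k_t^2$, so it terminates at a schedule with all entries in $\{m,m+1\}$. Strict convexity in fact shows that every optimum has this form.

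\textbf{Step 3 (identify $m$).} With $\sum_t k_t=K$ and entries in $\{m,m+1\}$, we get $m=\lfloor K/T\rfloor$, with exactly $K-Tm$ entries equal to $m+1$. It remains to check that
\[
\lfloor K/T\rfloor=\left\lfloor\frac{\lfloor (R-Tc_{\mathrm{comp}})/c_{\mathrm{comm}}\rfloor}{T}\right\rfloor=\left\lfloor \frac{R/T-c_{\mathrm{comp}}}{c_{\mathrm{comm}}}\right\rfloor=\bar k,
\]
which follows from the nested-floor identity $\lfloor\lfloor x\rfloor/T\rfloor=\lfloor x/T\rfloor$ for positive integers $T$.

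The main subtlety I expect is not the convexity computation but the bookkeeping around the budget. One must argue that the budget is saturated, or else note that unused budget smaller than $c_{\mathrm{comm}}$ cannot buy another round. One must also treat the edge cases $\nu=0$ and $K<0$ (infeasibility).
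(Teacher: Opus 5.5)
Your proposal is correct and follows essentially the same route as the paper: strict convexity of $\ln(1-\xi_1+\nu^k\xi_1)$ via the same second derivative, a pairwise transfer $(k_s,k_t)\mapsto(k_s-1,k_t+1)$ that strictly lowers the objective whenever $k_s-k_t\ge 2$, and budget saturation from monotonicity. Your increment-based exchange (in place of the paper's convex combination with $\theta=1/d$), the nested-floor identification of $\bar k$ via $K=\lfloor (R-Tc_{\mathrm{comp}})/c_{\mathrm{comm}}\rfloor$, and the flagged edge case $\nu=0$ (which the paper tacitly excludes by assuming $0<\nu<1$) are just more careful bookkeeping of the same argument.
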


\begin{proof}
Let
\[
h(k):=\ln\left(1-\xi_1+\nu^k\xi_1\right).
\]
For $0<\xi_1<1$ and $0<\nu<1$, $h(k)$ is strictly decreasing and strictly convex on $\mathbb{R}$. In particular,
\[
h''(k)
=
\frac{
\xi_1(1-\xi_1)(\ln\nu)^2\nu^k
}{
\left(1-\xi_1+\xi_1\nu^k\right)^2
}
>0.
\]

Suppose, by contradiction, that an optimal communication schedule contains two entries $k_1$ and $k_2$ such that
\[
k_1-k_2\geq 2.
\]
Consider a new schedule obtained by replacing the pair $(k_1,k_2)$ with
\[
(k_1-1,k_2+1),
\]
while keeping all other entries unchanged. This modification preserves the total number of consensus rounds since
\[
(k_1-1)+(k_2+1)=k_1+k_2,
\]
and hence preserves the total resource consumption.

Let
\[
d:=k_1-k_2\geq 2,\qquad\theta:=\frac{1}{d}\in(0,1).
\]
Then
\[
k_1-1=(1-\theta)k_1+\theta k_2,
\]
and
\[
k_2+1=\theta k_1+(1-\theta)k_2.
\]
By the strict convexity of $h$, we have
\[
h(k_1-1)<(1-\theta)h(k_1)+\theta h(k_2),
\]
and
\[
h(k_2+1)<\theta h(k_1)+(1-\theta)h(k_2).
\]
Adding the two inequalities gives
\[
h(k_1-1)+h(k_2+1)<h(k_1)+h(k_2).
\]
Therefore, transferring one communication round from the larger entry to the smaller entry strictly decreases the objective while keeping the total communication cost unchanged, contradicting the optimality of the original schedule.

Hence, any optimal communication schedule must satisfy
\[
\max_t k_t-\min_t k_t\leq 1.
\]
In other words, the optimal values of $\{k_t\}_{t=0}^{T-1}$ can take at most two adjacent integer values.

Since $h(k)$ is decreasing, the available communication budget is fully utilized up to the integer constraint. Thus, the average number of communication rounds per iteration is
\[
\frac{R/T-c_{\mathrm{comp}}}{c_{\mathrm{comm}}},
\]
and the two possible integer values are
\[
\bar{k}=\left\lfloor\frac{R/T-c_{\mathrm{comp}}}{c_{\mathrm{comm}}}\right\rfloor
\]
and $\bar{k}+1$. Therefore, the optimal schedule time-shares between $\bar{k}$ and $\bar{k}+1$.
\end{proof}

In practice, finding the optimal $\{k_t\}$ is straightforward. First initialize $k_t=\bar{k}$ for all iterations. Then compute the remaining communication budget after this initialization.
Then distribute the residual budget by assigning one additional consensus step to as many iterations as permitted by the remaining resources. Note that the objective in \eqref{prob_cost_obj} is 
invariant to a reordering of the sequence $\{k_t\}$. Hence,  
the specific order in which the communication resources are used 
does not affect the worst-case optimality with respect to the bound in \eqref{prob_cost_obj}.

\section{Numerical Examples}
\label{sec:simulation}

\begin{figure*}[ht]
  \centering
  \subfloat[Star graph]{%
    \label{fig:star}%
    \begin{minipage}[t]{0.32\linewidth}
      \centering
      \includegraphics[width=\linewidth]{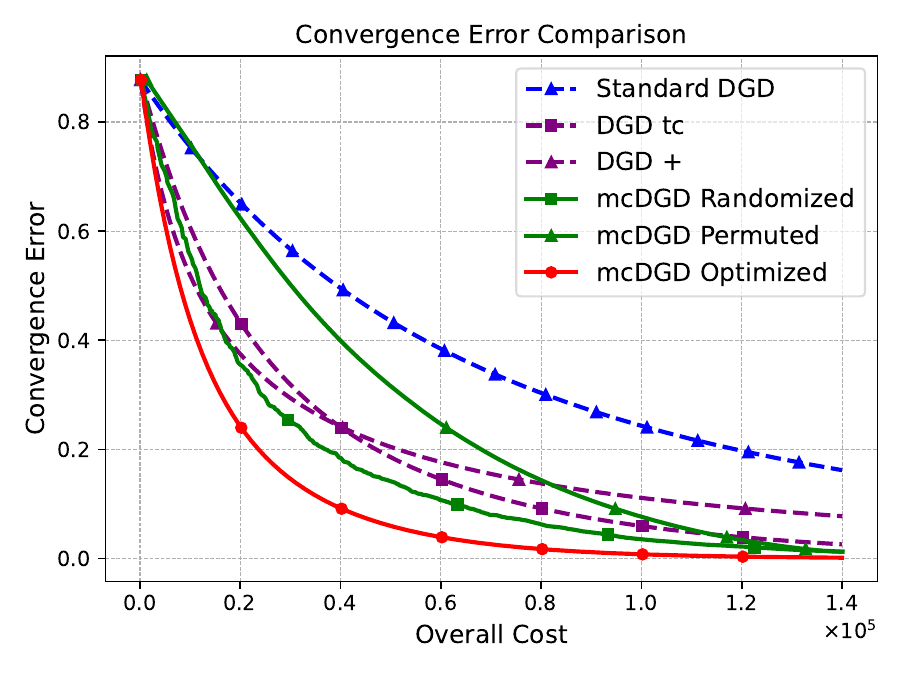}\\[0.6ex]
      \includegraphics[width=\linewidth]{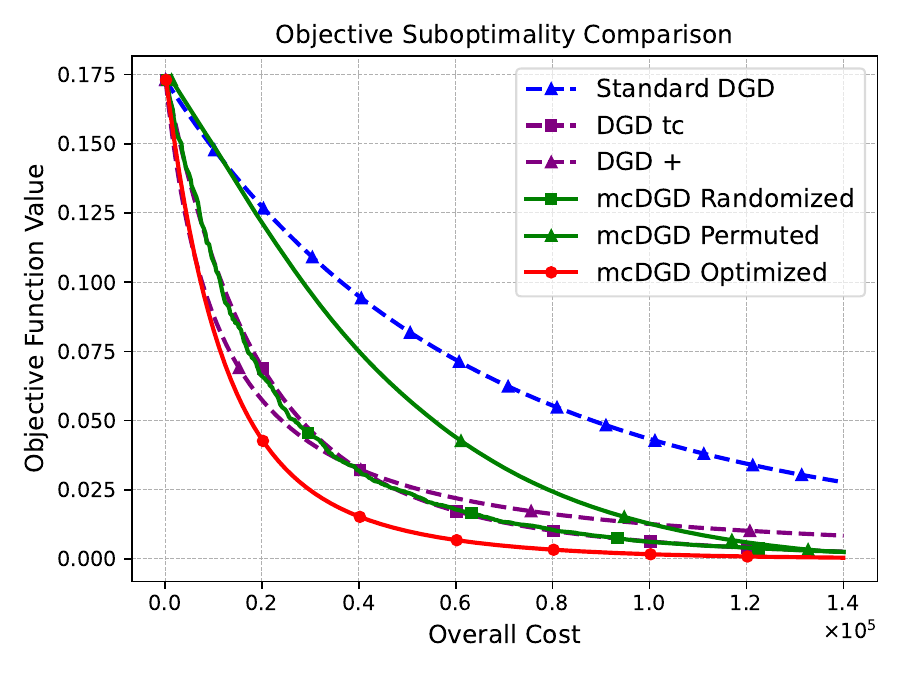}
    \end{minipage}
  }\hfill
  \subfloat[Slowed-down complete graph with $\gamma=0.1$]{%
    \label{fig:complete}%
    \begin{minipage}[t]{0.32\linewidth}
      \centering
      \includegraphics[width=\linewidth]{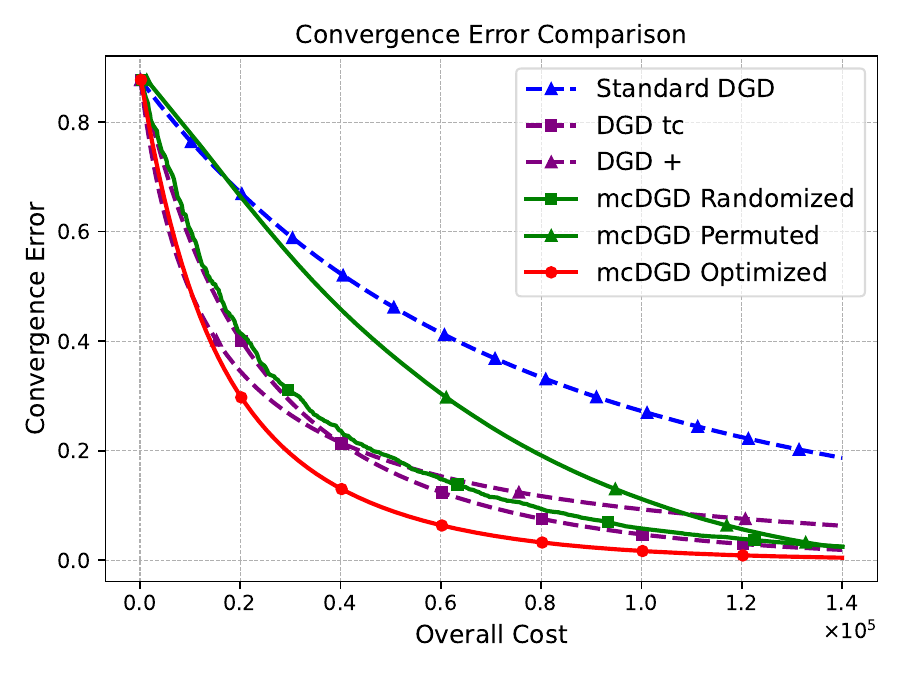}\\[0.6ex]
      \includegraphics[width=\linewidth]{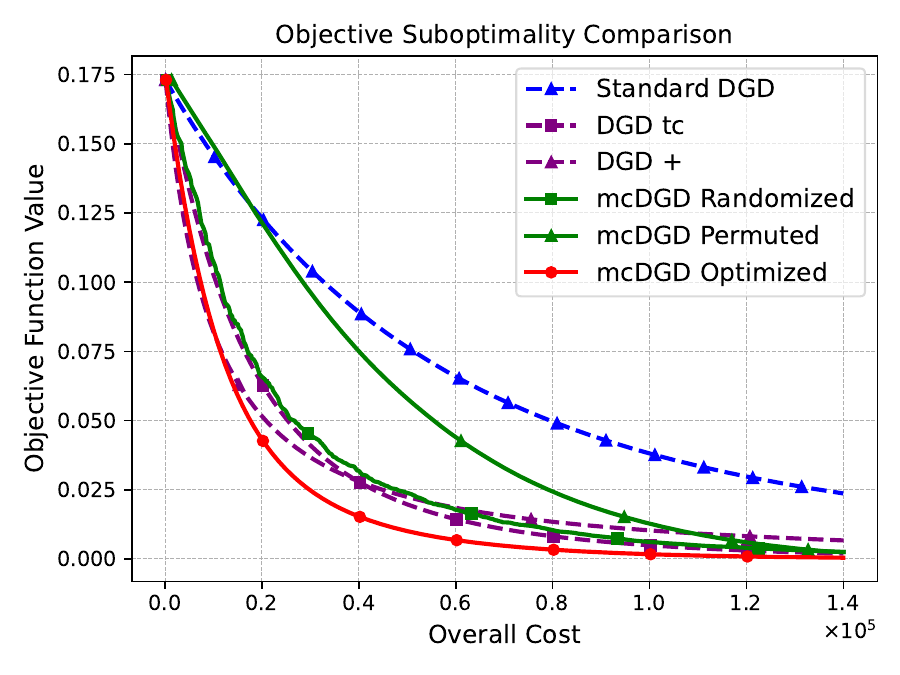}
    \end{minipage}
  }\hfill
  \subfloat[Ring graph]{%
    \label{fig:ring}%
    \begin{minipage}[t]{0.32\linewidth}
      \centering
      \includegraphics[width=\linewidth]{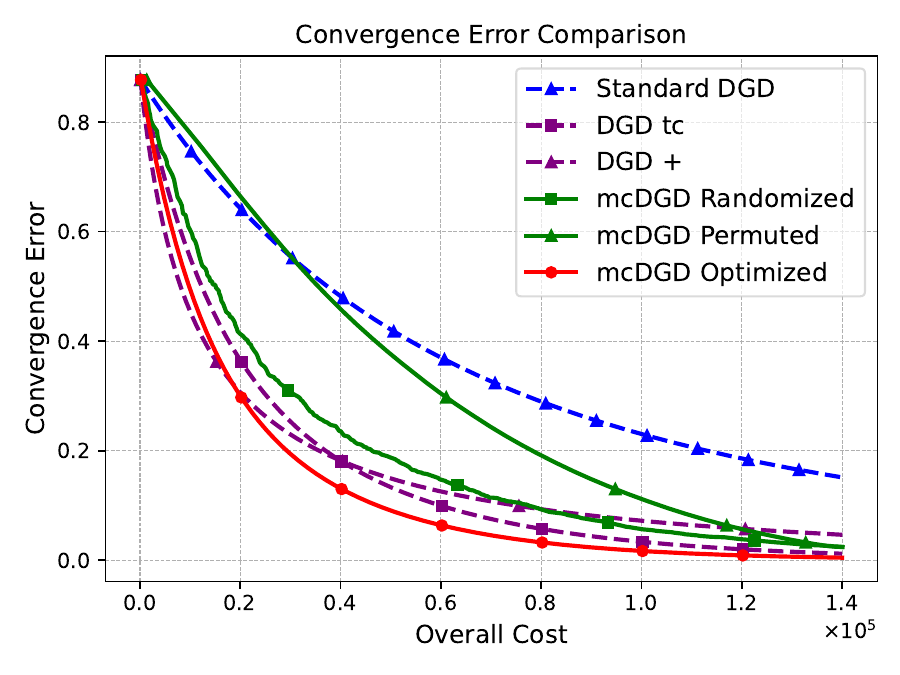}\\[0.6ex]
      \includegraphics[width=\linewidth]{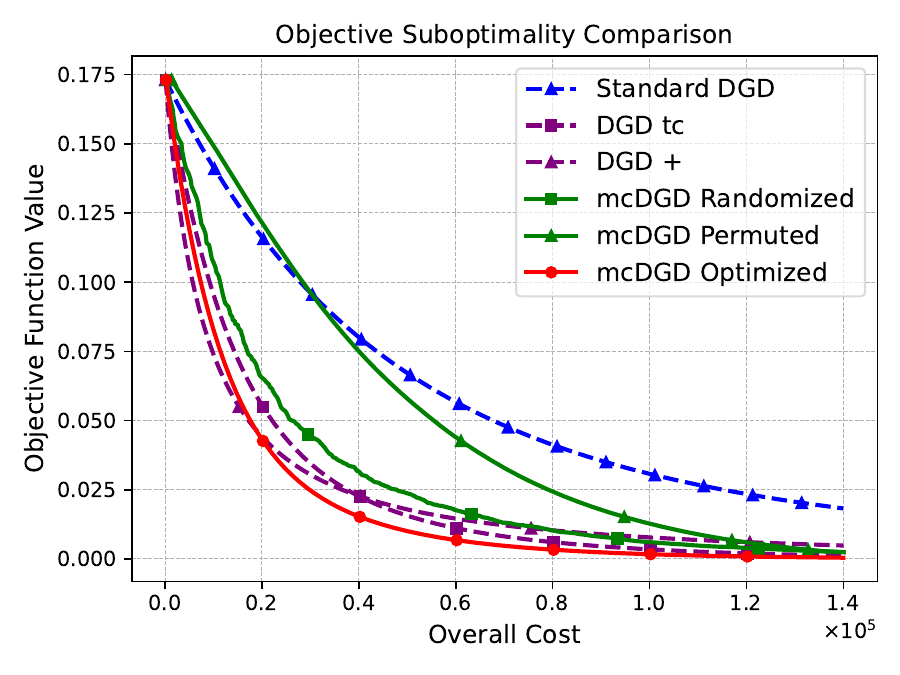}
    \end{minipage}
  } 
  \caption{Convergence error $\|\bx_t-x_\star\vecone\|^2$ and global objective function value after $T$ iterations $F(\bar{\bx}_T)$ as function of  overall consumed resources, which include both computation and communication. Parameters: $c_{\text{comp}}=100$ and $c_{\text{comm}}=1$.
  }
  \label{fig:simulation}
\end{figure*}

In this section, we consider decentralized least-squares regression. Each agent $n$ observes a scalar measurement $g_n\in\mathbb R$ and jointly optimizes an $M$-dimensional variable $\bx\in\mathbb R^M$. The local objective is
\[
f_n(\bx)
=
(g_n-G_n^\top\bx)^2,
\]
where $G_n\in\mathbb R^M$ is locally generated. We take $N=10$ agents and dimension $M=5$. In our implementation, the parameter $\kappa$ is selected from several candidate values within the admissible range
$\kappa<\frac{1}{L(1+L/\mu)}$, and we report the value that yields the best numerical performance among those tested. The selected $\kappa$ is then kept fixed across the considered mcDGD allocation strategies. Thus, $\kappa$ is empirically tuned rather than fixed a priori, although this numerical selection does not establish theoretical or global optimality over the entire admissible range. A joint theoretical optimization of $\kappa$ and the communication allocation $\{k_t\}$ is beyond the scope of the present work.

Importantly, the local objectives need not be strongly convex individually. Our analysis requires only that the aggregate global objective \eqref{eq:global_obj} is $\mu$-strongly convex. For the least-squares setup considered here, this holds whenever $N\ge M$ and the matrix formed by stacking all $\{G_n\}$ is full rank.

Figure~\ref{fig:simulation} compares performance for three network topologies:
\begin{itemize}
    \item a star graph,
    \item a slowed-down complete graph with
    
    \[
    W = (1-\gamma)I + \gamma \frac1N \mathbf1\mathbf1^\top,
    \]
    \item a ring graph.
\end{itemize}
For all methods, communication and computation costs are measured according to \eqref{prob_cost_cons}.

We compare the proposed mcDGD with the following benchmark methods:
\begin{itemize}
    \item ``\emph{Standard DGD}": decentralized gradient descent with one consensus step per iteration, $k_t=1,\forall t$. Following the stepsize design adopted in \eqref{eq:eta_t}, when $k_t=1$, the resulting constant stepsize becomes $\eta=(1-\nu)\kappa$.
    This choice ensures that standard DGD is evaluated under the same communication-aware stepsize scaling as the proposed mcDGD.
    
    \item ``\emph{DGD$^t$}" and ``\emph{DGD$^+$}": $\mathrm{DGD}^t$ denotes the multi-consensus extension of standard DGD proposed in~\cite{berahas2018balancing}, where a fixed number $t_c$ of consensus rounds is performed at every iteration, i.e., $k_t=t_c$. We additionally consider $\mathrm{DGD}^+$, a natural extension in which the number of consensus rounds increases with the iteration index, specifically $k_t=t$. This schedule is analogous to that used in $\mathrm{NEAR\mbox{-}DGD}^{+}$~\cite{berahas2018balancing}, but is applied here to the standard DGD update in \eqref{eq:DGD_update}. For both methods, we use a constant stepsize chosen to satisfy the largest possible stepsize that satisfies the condition
    \[
    \eta\le\min\!\left\{\frac{1+\lambda_N(W^{k_t})}{L},\frac{2}{L+\mu}\right\}
    \]
    at every iteration, following the condition derived for the standard-DGD-based $\mathrm{DGD}^t$ method in \cite{berahas2018balancing}.
\end{itemize}

For a fixed resource budget and the prescribed stepsize setting, our proposed mcDGD consistently achieves lower optimization error than fixed-consensus baselines after sufficiently many iterations, demonstrating the benefit of optimizing consensus allocation according to the resource model.

To exemplify the implications of Proposition~\ref{prop:optimality}, we additionally include two mcDGD variants using the same total communication budget $K\equiv\sum_{t=0}^{T-1}k_t$, defined as follows:

\begin{itemize}
\item random communication schedules generated by uniformly distributing a fixed total number of consensus rounds $K$ across the $T$ iterations\footnote{The random schedule is generated using the classical stars-and-bars construction~\cite{stanley2011enumerative}. We place $K$ indistinguishable consensus rounds (``stars") into $T$ iteration bins by sampling $T-1$ separator positions uniformly at random among $K+T-1$ locations. The resulting vector $(k_0,\ldots,k_{T-1})$ satisfies $\sum_t k_t=K$ and is uniformly distributed over all feasible allocations.}. Specifically, the sequence $\{k_t\}_{t=0}^{T-1}$ is sampled uniformly from the set

\[
\left\{
(k_0,\ldots,k_{T-1}) \in \mathbb{Z}_{\ge0}^{T}\mid
\sum_{t=0}^{T-1} k_t = K
\right\},
\] named as ``\emph{mcDGD Randomized}''.

\item permuted versions of the randomly generated $\{k_t\}$ sequence, named as ``\emph{mcDGD Permuted}".

\end{itemize}

The performance of these mcDGD variants are shown in green using square and triangle markers, respectively. 
They achieve identical final performance, confirming that the ordering of the consensus rounds does not affect the bound/objective in
 \eqref{prob_cost}. The randomly generated allocation of ${k_t}$ achieves a similar final accuracy, but exhibits inferior transient performance compared with the optimized allocation, illustrating the benefit of optimized communication allocation.

Finally, the proposed mcDGD with optimized $\{k_t\}$, shown in red, consistently achieves the best overall communication-computation efficiency across all considered network topologies.

\section{Conclusion and Future Work}
We developed an analytical framework for multi-consensus decentralized gradient descent (mcDGD) with iteration-varying stepsizes. The analysis requires smoothness and strong convexity of the global objective
but only convexity of the local objective functions. The resulting finite-time convergence bound explicitly characterizes the interplay between consensus quality and stepsize selection, enabling a principled communication resource allocation strategy under resource constraints. We showed that the optimal allocation assigns consensus rounds as evenly as possible across the iterations. Numerical experiments corroborate the theoretical findings and demonstrate consistent performance gains over other methods across different network topologies. 
Future work includes extending the analysis to 
allow iterations without consensus updates (i.e. $k_t=0$), as well as optimization of additional algorithm parameters such as $\kappa$ and $T$.

\balance
\bibliographystyle{IEEEbib}     
\bibliography{strings}

\color{red}

\end{document}